\newtheorem{Thm}{Theorem}[section]
\newtheorem{Prop}[Thm]{Proposition}
\newtheorem{Cor}[Thm]{Corollary}
\newtheorem{Lem}[Thm]{Lemma}
\theoremstyle{remark}
\newtheorem{Rem}[Thm]{Remark}
\theoremstyle{problem}
\numberwithin{equation}{section}
\begin{document}

\title[Pompeiu problem and spherical spectral analysis]
{The Pompeiu problem and spherical spectral analysis}

\author[M. J. Puls]{Michael J. Puls}
\address{Department of Mathematics \\
John Jay College-CUNY \\
524 West 59th Street \\
New York, NY 10019 \\
USA}
\email{mpuls@jjay.cuny.edu}
\thanks{The author would like to thank the Office for the Advancement of Research at John Jay College for research support for this project}

\begin{abstract}
Let $K$ be a compact subgroup of a locally compact group $G$. We investigate a Pompeiu type problem for homogeneous spaces $G/K$. Suppose $E$ is a compact subset of $G/K$. Using recent work of L\'{a}szl\'{o} Sz\'{e}kelyhidi on $K$-spectral analysis \cite{Szekelyhidi17} we are able to give necessary and sufficient conditions for $E$ to have the Pompeiu property when $(G, K)$ is a Gelfand pair. 
\end{abstract}

\keywords{Gelfand pair, homogeneous space, $K$-biinvariant, $K$-spectral analysis, $K$-translation, Pompeiu property, spherical function}
\subjclass[2020]{Primary: 43A85; Secondary: 22D15, 43A90}

\date{May 5th, 2025}
\maketitle

\section{Introduction}\label{Introduction}
Let $\mathbb{C}$ be the complex numbers, $\mathbb{R}$ the real numbers, $\mathbb{Z}$ the integers and $\mathbb{N}$ the natural numbers. Let $2 \leq n \in \mathbb{N}$ and let $E$ be a compact subset of $\mathbb{R}^n$ with positive Lebesgue measure. The Pompeiu problem asks the following: Is $f = 0$ the only continuous function on $\mathbb{R}^n$ that satisfies 
\begin{equation} 
 \int_{\sigma(E)} f \, dx = 0  \label{eq:contpomp}
\end{equation}
for all rigid motions $\sigma$? If the answer to the question is yes, then $E$ is said to have the Pompeiu property. It is known that disks of positive radius do not have the Pompeiu property, see  \cite[Section 6]{Zalcman80} and the references therein for the details. The question now becomes: Are disks the only compact subsets of positive measure in ${\mathbb{R}^n}$ that do not have the Pompeiu property? This question is still open. It is known though that polytopes in ${\mathbb{R}^n}, n \geq 2$, have the Pompeiu property. This was shown in \cite{Williams76}, a new and simpler proof of this result was recently given in \cite[Corollary 1.3]{MachadoRobins21}.  Various versions of the Pompeiu problem have been studied, see  \cite{BerensteinZalcman80, CareyKaniuthMoran91, LinnellPuls22, PeyerimhoffSamior10, Puls13, RawatSitaram97, ScottSitaram88, Zalcman80} and the references therein for more information about these variations. Pompeiu type problems for finite subsets of the plane were examined in \cite{KissLaczKovichVincze18, Kiss24}. An interesting connection between the Fuglede conjecture and the Pompeiu problem for finite abelian groups was established in \cite{KissMalSomlaiVizer20}.

In this paper we combine ideas from \cite{BerensteinZalcman80} and \cite{Szekelyhidi17} to study a version of the Pompeiu problem for homogeneous spaces. Throughout this paper $G$ will denote a locally compact, second countable, unimodular group with fixed Haar measure $dx$. We denote by $\mathcal{C}(G)$ the set of all continuous complex-valued functions on $G$, equipped with the pointwise operations and with the topology of uniform convergence on compact sets. We shall write $\mathcal{M}_c(G)$ for the space of compactly supported, complex Borel measures on $G$ with the pointwise operations. The convolution of $\mu \in \mathcal{M}_c(G)$ and $f \in \mathcal{C}(G)$ is defined by 
\[ (f \ast \mu) (x) = \int_G f(xy^{-1})\,d\mu(y), \]
where $x \in G$. If $A$ is a measurable subset of a space $X$, then $\chi_A$ will indicate the characteristic function on $A$. Now suppose the measurable subset $A$ of $G$ has finite Haar measure. Then
\[ (f \ast \chi_A )(x) = \int_G f(xy^{-1}) \chi_A (y)\, dy. \]

Let $K$ be a compact subgroup of $G$ with normalized Haar measure $dk$. Denote by $G/K$ the homogeneous space of left cosets $gK$. The measure $d\mu$ on $G/K$ is given by 
\[ \int_G f(x)\, dx = \int_{G/K} (f \ast \chi_K)\, d\mu = \int_{G/K} \int_K f(xk)\,dk d\mu(xK). \]
The group $G$ acts on $G/K$ by left translations $g (xK) = (gx)K$, where $g \in G$. The purpose of this paper is to study the following variation of the Pompeiu problem. Let $E$ be a compact subset of $G/K$. Is $f=0$ the only continuous function on $G/K$ that satisfies 
\[ \int_{gE} f\, d\mu = 0 \]
for all $g \in G$? If the answer to this question is yes, then we shall say that $E$ has the {\em Pompeiu property}. In this paper we will give necessary and sufficient conditions for $E$ to have the Pompeiu property when $(G, K)$ is a Gelfand pair and $G$ satisfies $K$-spectral analysis, which was introduced by Sz\'{e}kelyhidi in \cite{Szekelyhidi17}.

 Let $E$ be a compact subset of $G/K$. In Section \ref{asimplecharacterization} we characterize when $E$ has the Pompeiu property in terms of a convolution equation on $G$. We also define $K$-biinvariant functions and measures. In Section \ref{reduction} we reduce the problem of determining when $E$ has the Pompeiu property to determining when a certain ideal exhausts the algebra of $K$-biinvariant measures with compact support on $G$. We state and prove our main result in Section \ref{mainresultpaper}. A characterization of compact sets without the Pompeiu property in $\mathbb{R}^n$ was given in \cite[Theorem 4.1]{BrownSchreiberTaylor73}. In Section \ref{Euclideangroup} we establish a link between this theorem and our result. 
 
\section{A simple characterization}\label{asimplecharacterization}
In this section we will give a simple characterization for sets to have the Pompeiu property. We begin with some definitions. Let $K$ be a compact subgroup of $G$. We shall say that $f \in \mathcal{C}(G)$ is right (left) $K$-invariant if $f(xk) = f(x)$ $(f(kx) =f(x))$ for all $k \in K$ and $x \in G$. Calculations show that $\chi_K \ast \chi_K = \chi_K$ and $(f \ast \chi_K)(x) = \int_K f(xk)\, dk$, so convolution by $\chi_K$ on the right is a projection from $\mathcal{C}(G)$ to the continuous functions on $G$ that are right $K$-invariant. We identify the continuous functions on $G$ that are right $K$-invariant with the continuous functions on $G/K$ via $f(g) = f(gK)$. Similarly, $\chi_K \ast f$ is the projection from $\mathcal{C}(G)$ to the set of functions on $G$ that are left $K$-invariant. If $E$ is a subset of $G/K$ denote by $\widetilde{E}$ the lift of $E$ to $G$, that is $\widetilde{E} =\{ g \in G \mid gK \in E\}$. For a complex-valued function on $G$, $\widecheck{f}$ will denote the function $\widecheck{f}(x) = f(x^{-1})$. It is easy to see that if $f$ is right $K$-invariant, then $\widecheck{f}$ is left $K$-invariant and vice-versa. We now give a simple characterization of when a compact set in $G/K$ has the Pompeiu property in terms of a convolution equation on $G$.
\begin{Lem}\label{Pompeiuchar}
Let $K$ be a compact subgroup of $G$ and let $E$ be a compact subset of $G/K$ with positive measure. Then $E$ has the Pompeiu property if and only if $f =0$ is the only right $K$-invariant function in $\mathcal{C}(G)$ that satisfies $f \ast \widecheck{\chi}_{\widetilde E} =0$. 
\end{Lem}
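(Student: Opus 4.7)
The plan is to rewrite the Pompeiu integral $\int_{gE} f\,d\mu$ as the value at $g$ of a convolution of $f$ with $\widecheck\chi_{\widetilde E}$, after which the lemma becomes immediate.

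First I would fix the translation between the two pictures. A continuous function on $G/K$ is identified, per the paper's convention, with a continuous right $K$-invariant function on $G$ by $f(g)=f(gK)$. So the set of candidate counterexamples to the Pompeiu property is exactly the set of right $K$-invariant $f\in\mathcal C(G)$. It therefore suffices to show that, for such an $f$,
\[
 \int_{gE} f\,d\mu \;=\; (f\ast\widecheck\chi_{\widetilde E})(g) \qquad (g\in G),
\]
as the equivalence of ``$\int_{gE}f\,d\mu=0$ for every $g\in G$'' with ``$f\ast\widecheck\chi_{\widetilde E}=0$'' is then literal.

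To prove this identity I would proceed in three small steps. (i) Observe that since $\widetilde E$ is right $K$-invariant, the right $K$-invariant function $x\mapsto \chi_{\widetilde E}(g^{-1}x)$ descends to the function $xK\mapsto \chi_{gE}(xK)$ on $G/K$. Because $f$ is also right $K$-invariant, the product $\chi_{gE}(xK)f(xK)$ lifts to the right $K$-invariant function $\chi_{\widetilde E}(g^{-1}x)f(x)$ on $G$. Applying the quotient-integration formula in the excerpt to this lifted function gives
\[
 \int_{gE} f\,d\mu \;=\; \int_{G}\chi_{\widetilde E}(g^{-1}x)\,f(x)\,dx.
\]
(ii) In this last integral substitute $x=gy$ and use left-invariance of $dx$ to get $\int_{\widetilde E} f(gy)\,dy$. (iii) From the definition of convolution in the introduction,
\[
 (f\ast\widecheck\chi_{\widetilde E})(g) \;=\; \int_G f(gy^{-1})\chi_{\widetilde E}(y^{-1})\,dy,
\]
and, using the unimodularity of $G$, the change of variable $y\mapsto y^{-1}$ turns this into $\int_{\widetilde E} f(gy)\,dy$, matching step (ii). Combining (i)--(iii) yields the identity, and the lemma follows.

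There is no real obstacle here; the main care needed is to use the right $K$-invariance of $\widetilde E$ and of $f$ at exactly the point where $\chi_{gE}(xK)$ is replaced by $\chi_{\widetilde E}(g^{-1}x)$, and to invoke unimodularity when inverting the variable in the convolution integral.
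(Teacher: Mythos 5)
Your proposal is correct and follows essentially the same route as the paper: both reduce the lemma to the single identity $\int_{gE} f\,d\mu = (f\ast\widecheck{\chi}_{\widetilde{E}})(g)$, obtained by lifting the integral over $gE$ to $\int_G f(x)\chi_{\widetilde{E}}(g^{-1}x)\,dx$ via the right $K$-invariance of $f$ and $\widetilde{E}$ and then recognizing the convolution after an inversion change of variable justified by unimodularity. The only difference is cosmetic: you pass explicitly through the intermediate expression $\int_{\widetilde{E}} f(gy)\,dy$, while the paper rewrites $\chi_{\widetilde{E}}(g^{-1}x)$ as $\widecheck{\chi}_{\widetilde{E}}(x^{-1}g)$ and identifies the convolution directly.
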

\begin{proof}
Let $f$ be a continuous function on $G/K$. Identify $f$ with a continuous function, which we also call $f$, on $G$ that is right $K$-invariant. Observe that for $g \in G$, 
\begin{eqnarray*}
  \int_{gE} f\, d\mu & = &  \int_G f(x)\chi_{g\widetilde{E}}(x)\, dx \\
                             &  =& \int_G f(x) \chi_{\widetilde{E}} (g^{-1}x) \, dx \\
                              & = & \int_G f(x) \widecheck{\chi}_{\widetilde{E}} (x^{-1}g)\, dx \\ 
                               & = &(f \ast \widecheck{\chi}_{\widetilde{E}})(g). 
\end{eqnarray*}
So $\int_{gE} f\, d\mu = 0$ for all $g \in G$ if and only if $f \ast \widecheck{\chi}_{\widetilde{E}} = 0$. Thus $E$ has the Pompeiu  property if and only if $f =0$ is the only right $K$-invariant function in $\mathcal{C}(G)$ that satisfies $f \ast \widecheck{\chi}_{\widetilde{E}} =0$.
\end{proof}

In order to determine if $E$ has the Pompeiu property, the question now becomes: When is $f=0$ the only right $K$-invariant function on $G$ that satisfies $f \ast \widecheck{\chi}_{\widetilde{E}} = 0$?  Before we answer this question we need to discuss $K$-biinvariant functions and measures on $G$. 

Let $K$ be a compact subgroup of $G$. We shall say that $f \in \mathcal{C}(G)$ is $K$-biinvariant if $f(kxl) = f(x)$ for all $k, l \in K$ and $x \in G$. We will denote the space of continuous $K$-biinvariant functions on $G$ by $\mathcal{C}(G//K)$. For $f \in \mathcal{C}(G), \chi_K \ast f \ast \chi_K$ is clearly in $\mathcal{C}(G//K)$ and a calculation shows that 
\[ \chi_K \ast f \ast \chi_K = \int_K \int_K f(lxk)\, dldk. \]
In the spirt of \cite{Szekelyhidi17} we shall write $f^{\#}$ to denote the above projection of $f \in \mathcal{C}(G)$ onto $\mathcal{C}(G//K)$, that is 
\[ f^{\#}(x) = \int_K \int_K f(lxk)\, dl dk. \]
The dual space of $\mathcal{C}(G)$ can be identified with $\mathcal{M}_c(G)$ equipped with the weak*-topology. The continuous linear functional $F\colon \mathcal{C}(G) \rightarrow \mathbb{C}$ is given by 
\[ F(f) = \langle f, \mu \rangle = \int_G f\, d\mu \]
for some $\mu \in \mathcal{M}_c(G)$. For $\mu \in \mathcal{M}_c(G)$, the measure $\widecheck{\mu}$ is defined by 
\[ \langle f, \widecheck{\mu} \rangle = \langle \widecheck{f}, \mu \rangle, \]
where $f \in \mathcal{C}(G)$. The measure $\mu \in \mathcal{M}_c(G)$ is defined to be $K$-biinvariant if 
\[ \langle f, \mu \rangle = \langle f^{\#}, \mu \rangle \]
for all $f \in \mathcal{C}(G)$. Thus the projection $\mu^{\#}$ of $\mu \in \mathcal{M}_c(G)$ is given by
\[ \langle f, \mu^{\#} \rangle = \int_G \int_K \int_K f(lxk)\, dl dk d\mu(x) \]
for all $f \in \mathcal{C}(G)$. Observe, $\langle f, \mu^{\#} \rangle = \langle f^{\#}, \mu \rangle$, so $\mu$ is $K$-biinvariant if and only if $\mu^{\#} = \mu$. Denote by $\mathcal{M}_c (G//K)$ the set of measures in $\mathcal{M}_c(G)$ that are $K$-biinvariant. It was shown in \cite[Theorem 2]{Szekelyhidi17} that $\mathcal{M}_c(G//K)$ is the dual of $\mathcal{C}(G//K)$. 

The convolution of measures $\mu, \nu$ in $\mathcal{M}_c(G//K) $ is given by 
\[ \langle  f , \mu \ast \nu \rangle = \int f(xy)\, d\mu(x) d\nu(y) \]
where $f \in \mathcal{C}(G)$. It was shown in \cite[Sections 1 \& 2]{Szekelyhidi17} that $\mathcal{M}_c(G//K)$ and $\mathcal{M}_c(G)$ are topological algebras with multiplication given by convolution of measures. The point measure $\delta_e$, where $e$ is the identity element in $G$, is the multiplicative unit in both $\mathcal{M}_c(G)$ and $\mathcal{M}_c(G//K)$. 

\section{The Pompeiu property and $\mathcal{M}_c(G//K)$}\label{reduction} 
In this section we prove a result that connects Lemma \ref{Pompeiuchar} with a certain ideal in $\mathcal{M}_c(G//K)$. This will allow us to use results from \cite{Szekelyhidi17} to determine when a compact subset in $G/K$ has the Pompeiu property. We start with some definitions.

Let $f \in \mathcal{C}(G)$ and let $g \in G$. Recall that the right translation of $f$ by $g$ is $R_gf(x) = f(xg)$, and the left translation of $f$ by $g$ is $L_gf(x) = f(g^{-1}x)$, where $x \in G$. However, if $f \in \mathcal{C}(G//K)$ it might be the case that $R_gf$ is not in $\mathcal{C}(G//K)$. We can remedy this situation by taking the right projection $R_gf \ast \chi_K$ of $R_g f$. Specifically for $x \in G$,
\[ Rg f \ast \chi_K (x) = \int_K f(xkg)\, dk. \]
We shall say that $\int_K f(xkg)\, dk$ is the right {\em $K$-translation} of $f$ by $g$, which we denote by $\tau_gf(x)$. Let $g \in G$ and let $\delta_g$ be the point measure concentrated on $g$. For $f \in \mathcal{C}(G//K)$, 
\[ f \ast \delta^{\#}_{g^{-1}} (x) = \int_G f(xy^{-1})\, d\delta^{\#}_{g^{-1}} (y) = \int_G L_{x^{-1}} f(y^{-1})\, d\delta^{\#}_{g^{-1}} (y) = \]
\[ \langle \widecheck{L_{x^{-1}}f}, \delta^{\#}_{g^{-1}} \rangle = \langle (\widecheck{L_{x^{-1}}f})^{\#}, \delta_{g^{-1}} \rangle = \int_G(\widecheck{L_{x^{-1}}f})^{\#} (y) \, d\delta_{g^{-1}}(y) = \]
\[ \int_G \int_K \int_K L_{x^{-1}}f (k^{-1}y^{-1} l^{-1}) \, dl dk d\delta_{g^{-1}}(y)  = \]
\[ \int_K \int_K f(xk^{-1}gl^{-1}) \, dldk = \int_K f(xkg)\, dk = \tau_g(f)(x). \]
Thus $f \ast \delta_{g^{-1}}^{\#} = \tau_g(f)$ when $f \in \mathcal{C}(G//K)$. 

A subset $X \subseteq \mathcal{C}(G//K)$ is said to be {\em $K$-translation invariant} if $\tau_g(f) \in X$ for all $g \in G$ and $f \in X$. A {\em $K$-variety} is a closed $K$-translation invariant linear subspace of $\mathcal{C}(G//K)$. It was shown in \cite[Theorem 3]{Szekelyhidi17} that the finitely supported measures in $\mathcal{M}_c(G//K)$ form a dense subalgebra in $\mathcal{M}_c(G//K)$. Using this result it can be shown that the right $K$-invariant subspaces in $\mathcal{M}_c(G//K)$ are precisely the right ideals in $\mathcal{M}_c(G//K)$, see the proof of \cite[Theorem 5]{Szekelyhidi17} for the details. For a right ideal $I$ in $\mathcal{M}_c(G//K)$ set 
\[ I^{\perp} = \{ f \in \mathcal{C}(G//K) \mid \langle f, \mu \rangle =0 \mbox{ for all }\mu \in I \}. \]
\begin{Lem} \label{dualideal}
Let $I$ be a right ideal in $\mathcal{M}_c(G//K)$. Then $I^{\perp}$ is a $K$-variety. 
\end{Lem}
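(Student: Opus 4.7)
The plan is to verify the three defining properties of a $K$-variety for $I^\perp$: that it is a linear subspace of $\mathcal{C}(G//K)$, that it is closed in the topology of uniform convergence on compact sets, and that it is $K$-translation invariant. The first is immediate from the definition, since $I^\perp$ is the common zero set of the family of linear functionals $\{f \mapsto \langle f, \mu\rangle : \mu \in I\}$. For closedness, I would note that each such functional is continuous on $\mathcal{C}(G//K)$ because $\mu$ has compact support, so $I^\perp$ is an intersection of closed hyperplanes.

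The substantive step is $K$-translation invariance. Fix $f \in I^\perp$, $g \in G$, and $\mu \in I$; the aim is to show $\langle \tau_g f, \mu\rangle = 0$. I will invoke the identity $\tau_g f = f \ast \delta_{g^{-1}}^{\#}$ established just before the statement, and then use the standard duality between convolution of a function with a measure and convolution of two measures. A short Fubini calculation gives
\[ \langle f \ast \nu, \mu\rangle = \int f(xy^{-1})\, d\nu(y)\, d\mu(x) = \langle f, \mu \ast \widecheck{\nu}\rangle \]
for any $\nu \in \mathcal{M}_c(G)$. Applied with $\nu = \delta_{g^{-1}}^{\#}$, this yields $\langle \tau_g f, \mu\rangle = \langle f, \mu \ast \widecheck{\delta_{g^{-1}}^{\#}}\rangle$.

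To close the argument, I need to check that $\widecheck{\delta_{g^{-1}}^{\#}}$ still lies in $\mathcal{M}_c(G//K)$. This reduces to the identity $\widecheck{\varphi^{\#}} = (\widecheck{\varphi})^{\#}$ for $\varphi \in \mathcal{C}(G)$, which follows from unwinding the definition of $\#$ and using the bi-invariance of Haar measure on $K$; dualizing shows that the $\widecheck{\phantom{m}}$ operation preserves $K$-biinvariance of measures. Since $I$ is a right ideal in $\mathcal{M}_c(G//K)$, the product $\mu \ast \widecheck{\delta_{g^{-1}}^{\#}}$ lies in $I$, and hence the pairing with $f \in I^\perp$ vanishes, as required. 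The only place where genuine care is needed is the bookkeeping of the $\widecheck{\phantom{m}}$ and $\#$ operations and the derivation of the duality formula linking the two kinds of convolution; once those are in hand, the conclusion is immediate.
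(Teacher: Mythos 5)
Your proposal is correct and follows essentially the same route as the paper: the paper's proof is the direct computation $\langle f, \mu \ast \delta_g^{\#}\rangle = \langle f \ast \delta_{g^{-1}}^{\#}, \mu\rangle$, which is exactly your duality formula $\langle f \ast \nu, \mu\rangle = \langle f, \mu \ast \widecheck{\nu}\rangle$ specialized to $\nu = \delta_{g^{-1}}^{\#}$ together with the identity $\widecheck{\delta_{g^{-1}}^{\#}} = \delta_g^{\#}$. The only difference is organizational (you state the duality in general and verify $K$-biinvariance of $\widecheck{\delta_{g^{-1}}^{\#}}$ separately, where the paper folds everything into one chain of equalities), and your bookkeeping of the $\widecheck{\phantom{m}}$ and $\#$ operations checks out.
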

\begin{proof}
It is easy to see that $I^{\perp}$ is a closed linear space in $\mathcal{C}(G//K)$. Let $f \in I^{\perp}$ and $g\in G$. We need to show that $\tau_g(f) = f \ast \delta_{g^{-1}}^{\#} \in I^{\perp}$. Since $I$ is a right ideal in $\mathcal{M}_c(G//K), \mu \ast \delta_g^{\#} \in I$ for all $\mu \in I$. Let $\mu \in I$, then 
\[ \langle f, \mu \ast \delta_g^{\#} \rangle = \int_G f(t) \, d(\mu \ast \delta_g^{\#}) (t) =\int_G\int_G f(xy) \, d\mu(x)d\delta_g^{\#}(y) = \] 
 \[ \int_G \left(\int_G L_{x^{-1}}f(y) \, d\delta_g^{\#}(y) \right) d\mu(x) = \int_G \left( \int_G (L_{x^{-1}}f(y))^{\#}\, d\delta_g(y) \right) d\mu(x) =  \]
 \[ \int_G \left( \int_G \int_K \int_K f(xkyl)\, dkdld\delta_g(y) \right) d\mu(x) = \]
 \[ \int_G f \ast \delta_{g^{-1}}^{\#} (x) \, d\mu(x) = \langle f\ast \delta_{g^{-1}}^{\#}, \mu \rangle. \]
Since $\langle f, \mu\ast \delta_g^{\#} \rangle = 0$ we see immediately that $f\ast \delta_{g^{-1}}^{\#} \in I^{\perp}$ for all $g \in G$. 
\end{proof}
We will also need
\begin{Lem} \label{annconvolution}
Let $I$ be a right ideal in $\mathcal{M}_c(G//K)$. If $f \in I^{\perp}$, then $\widecheck{f} \ast \mu =0$ for all $\mu \in I$.
\end{Lem}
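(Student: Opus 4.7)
The plan is to evaluate $(\widecheck{f}\ast\mu)(x)$ pointwise and rewrite it as a pairing of the form $\langle f,\nu\rangle$ for some $\nu\in I$, whereupon membership of $f$ in $I^{\perp}$ gives the vanishing. The key bridge is the identity $\tau_{g}(f)=f\ast\delta_{g^{-1}}^{\#}$ together with the adjoint formula that was effectively established in the proof of Lemma \ref{dualideal}.

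First I would unfold the definition: for $x\in G$,
\[
(\widecheck{f}\ast\mu)(x)=\int_G \widecheck{f}(xy^{-1})\,d\mu(y)=\int_G f(yx^{-1})\,d\mu(y)=\langle R_{x^{-1}}f,\mu\rangle.
\]
Since $\mu\in\mathcal{M}_c(G//K)$, this pairing is unchanged by projecting $R_{x^{-1}}f$ onto $\mathcal{C}(G//K)$. Using the left $K$-invariance of $f$, the double integral $(R_{x^{-1}}f)^{\#}(y)=\int_K\!\int_K f(lykx^{-1})\,dl\,dk$ collapses to $\int_K f(ykx^{-1})\,dk=\tau_{x^{-1}}(f)(y)$. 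Consequently,
\[
(\widecheck{f}\ast\mu)(x)=\langle \tau_{x^{-1}}(f),\mu\rangle.
\]

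Next I would invoke the calculation displayed inside the proof of Lemma \ref{dualideal}, which shows the adjoint identity
\[
\langle f\ast\delta_{g^{-1}}^{\#},\mu\rangle=\langle f,\mu\ast\delta_g^{\#}\rangle
\]
for any $g\in G$ and $\mu\in\mathcal{M}_c(G//K)$. Taking $g=x^{-1}$ and combining with $\tau_{x^{-1}}(f)=f\ast\delta_{x}^{\#}$ yields
\[
(\widecheck{f}\ast\mu)(x)=\langle \tau_{x^{-1}}(f),\mu\rangle=\langle f,\mu\ast\delta_{x^{-1}}^{\#}\rangle.
\]
Since $I$ is a right ideal and $\delta_{x^{-1}}^{\#}\in\mathcal{M}_c(G//K)$, the measure $\mu\ast\delta_{x^{-1}}^{\#}$ again lies in $I$, and the hypothesis $f\in I^{\perp}$ forces the right-hand side to vanish. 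As $x\in G$ and $\mu\in I$ were arbitrary, $\widecheck{f}\ast\mu=0$.

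I do not expect a real obstacle here, since everything follows from bookkeeping with the $K$-biinvariant projection and the adjoint identity already derived in Lemma \ref{dualideal}. The only subtlety to watch is that $R_{x^{-1}}f$ need not be $K$-biinvariant, so one must explicitly justify replacing it by its projection under the pairing with $\mu$; once that is in place, the computation is mechanical.
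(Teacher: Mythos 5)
Your proof is correct and follows essentially the same route as the paper: both arguments hinge on the right-ideal property $\mu \ast \delta_{g}^{\#} \in I$ and identify the point values of $\widecheck{f} \ast \mu$ with the pairings $\langle f, \mu \ast \delta_{g}^{\#} \rangle = 0$. The only cosmetic difference is the direction of the computation (you start from $(\widecheck{f}\ast\mu)(x)$ and rewrite it via the projection and the adjoint identity from Lemma \ref{dualideal}, whereas the paper starts from $\langle f, \mu\ast\delta_g^{\#}\rangle$ and unwinds it using evaluation at $e$ and associativity of convolution).
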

\begin{proof}
Let $I$ be a right ideal in $\mathcal{M}_c(G//K), \mu \in I$ and let $f \in I^{\perp}$. Then for $g \in G$, 
\[ 0 = \langle f, \mu \ast \delta_g^{\#} \rangle = (\widecheck{f}\ast (\mu \ast \delta_g^{\#}))(e) = ((\widecheck{f} \ast \mu) \ast \delta_g^{\#}) (e) = \int_G (\widecheck{f} \ast \mu ) (y^{-1})\, d\delta_g^{\#} (y) = \]
\[ \langle (\widecheck{f} \ast \mu), \widecheck{\delta_g}^{\#} \rangle =  \langle (\widecheck{f} \ast \mu)^{\#}, \widecheck{\delta_g} \rangle = \langle \widecheck{f} \ast \mu, \widecheck{\delta_g} \rangle = \langle \widecheck{f} \ast \mu, \delta_{g^{-1}} \rangle = \]
\[ \int_G (\widecheck{f }\ast \mu)(y)\, d\delta_{g^{-1}}(y) = (\widecheck{f} \ast \mu ) (g^{-1}). \]
Thus $\widecheck{f} \ast \mu =0$. 
\end{proof}

Let $H$ be a subgroup of $G$. By a left transversal we mean a set of left coset representatives for $H$ in $G$. Let $T$ be a left transversal of $K$ in $G$. For $g \in T$, the characteristic function $\chi_{\widetilde{gK}}$ is right $K$-invariant. If $E$ is a compact subset of $G/K$, then $\chi_{\widetilde{E}}$ is right $K$-invariant and $\widecheck{\chi}_{\widetilde{E}}$ is left $K$-invariant. Thus $\widecheck{\chi}_{\widetilde{E}} \ast \chi_{\widetilde{gK}} \in \mathcal{M}_c (G//K) $ for each $g \in T$. Now let $I$ be the right ideal in $\mathcal{M}_c(G//K)$ that is generated by $\{ \widecheck{\chi}_{\widetilde{E}} \ast \chi_{\widetilde{gK}} \mid g \in T\}$. We are now ready to give a necessary and sufficient condition on $I$ for $E$ to have the Pompeiu property. 
\begin{Thm} \label{Pompeiureduce}
Let $K$ be a compact subgroup of $G$. Let $E$ be a compact subset of $G/K$ and let $I$ be the right ideal in $\mathcal{M}_c(G//K)$ that is generated by the set $\{ \widecheck{\chi}_{\widetilde{E}}\ast \chi_{\widetilde{gK}} \mid g\in T\}$. Then $E$ has the Pompeiu property if and only if $I = \mathcal{M}_c(G//K)$. 
\end{Thm}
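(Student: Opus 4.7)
The plan is to prove the two implications separately. The forward direction is immediate from the algebraic fact that $I = \mathcal{M}_c(G//K)$ contains the unit $\delta_e$. Writing $\nu_g = \widecheck{\chi}_{\widetilde{E}} \ast \chi_{\widetilde{gK}}$ for the generators of $I$, any expression $\delta_e = \sum_{i=1}^n \nu_{g_i} \ast \mu_i$ with $g_i \in T$ and $\mu_i \in \mathcal{M}_c(G//K)$ allows one to conclude, for any right $K$-invariant $f \in \mathcal{C}(G)$ satisfying $f \ast \widecheck{\chi}_{\widetilde{E}} = 0$, that $f \ast \nu_{g_i} = (f \ast \widecheck{\chi}_{\widetilde{E}}) \ast \chi_{\widetilde{g_iK}} = 0$ by associativity of convolution, and therefore $f = f \ast \delta_e = \sum_i (f \ast \nu_{g_i}) \ast \mu_i = 0$. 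Lemma~\ref{Pompeiuchar} then delivers the Pompeiu property.

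For the reverse direction I would proceed by contrapositive: assuming $I \neq \mathcal{M}_c(G//K)$, apply the Hahn-Banach theorem in the duality $\langle \mathcal{C}(G//K), \mathcal{M}_c(G//K) \rangle$ (valid by \cite[Theorem 2]{Szekelyhidi17}) to produce a nonzero $f \in I^\perp \subseteq \mathcal{C}(G//K)$. Lemma~\ref{annconvolution} then yields $\widecheck{f} \ast \mu = 0$ for every $\mu \in I$, and in particular $\widecheck{f} \ast \widecheck{\chi}_{\widetilde{E}} \ast \chi_{\widetilde{gK}} = 0$ for every $g \in T$. Setting $h = \widecheck{f} \ast \widecheck{\chi}_{\widetilde{E}}$, which is left $K$-invariant since $\widecheck{f}$ is (because $f$ is $K$-biinvariant), I would evaluate $h \ast \chi_{\widetilde{gK}} = 0$ at the identity element of $G$ to obtain $\int_K h(k^{-1}g^{-1})\, dk = h(g^{-1}) = 0$ for all $g \in T$, and then use the left $K$-invariance of $h$ together with the transversal decomposition $G = \bigcup_{t \in T} tK$ to conclude $h \equiv 0$. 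Thus $\widecheck{f}$ is a nonzero continuous right $K$-invariant function annihilated by right convolution with $\widecheck{\chi}_{\widetilde{E}}$, so Lemma~\ref{Pompeiuchar} says $E$ fails the Pompeiu property, the desired contradiction.

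The step I expect to be the main obstacle is the Hahn-Banach deduction in the reverse direction: extracting a nonzero element of $I^{\perp}$ from the hypothesis $I \neq \mathcal{M}_c(G//K)$ really requires $I$ to be weak-$*$ closed, or equivalently $\overline{I} \neq \mathcal{M}_c(G//K)$. If $I$ were algebraically proper but weak-$*$ dense, the annihilator argument would collapse. I would handle this either by interpreting ``right ideal generated by a set'' in the sense of \cite{Szekelyhidi17} to include taking the weak-$*$ closure, or by supplementing the proof with a density argument (e.g.\ via \cite[Theorem 3]{Szekelyhidi17} on the density of finitely supported $K$-biinvariant measures) to promote $\overline{I} = \mathcal{M}_c(G//K)$ to $\delta_e \in I$. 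The remaining manipulations are routine bookkeeping within the $K$-biinvariant formalism of Section~\ref{asimplecharacterization}.
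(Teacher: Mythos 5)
Your proof follows essentially the same route as the paper's: the forward direction uses $\delta_e \in I$ together with $f \ast (\widecheck{\chi}_{\widetilde{E}} \ast \chi_{\widetilde{gK}}) = (f \ast \widecheck{\chi}_{\widetilde{E}}) \ast \chi_{\widetilde{gK}}$, and the reverse direction extracts a nonzero $f \in I^{\perp}$, applies Lemma~\ref{annconvolution}, evaluates at $e$ to get $(\widecheck{f} \ast \widecheck{\chi}_{\widetilde{E}})(g^{-1}) = 0$ for $g \in T$, and uses left $K$-invariance plus the transversal to conclude via Lemma~\ref{Pompeiuchar}. The weak-$*$ closure caveat you raise is a genuine subtlety: the paper simply asserts that $I \neq \mathcal{M}_c(G//K)$ implies $I^{\perp} \neq \{0\}$, which requires interpreting $I$ as the closed right ideal generated by the given measures (consistent with the variety--ideal duality of \cite{Szekelyhidi17}); with that reading both directions go through, since $\mu \mapsto (f \ast \mu)(x)$ is weak-$*$ continuous.
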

\begin{proof}
Assume $I = \mathcal{M}_c(G//K)$. If $f$ is a right $K$-invariant function in $\mathcal{C}(G)$ that satisfies $f \ast \widecheck{\chi}_{\widetilde{E}} = 0$, then $f \ast \mu =0$ for all $\mu \in I$. Consequently, $f=0$ since $\delta_e \in I$. By Lemma \ref{Pompeiuchar}, $E$ has the Pompeiu property. 

Now suppose $I \neq \mathcal{M}_c(G//K)$, then $I^{\perp} \neq \{0\}$. By Lemma \ref{annconvolution}, $\widecheck{f} \ast \mu = 0$ for all $\mu \in I$ and $f \in I^{\perp}$. In particular, there is a nonzero $f \in I^{\perp}$ that satisfies $\widecheck{f} \ast (\widecheck{\chi}_{\widetilde{E}} \ast \chi_{\widetilde{gK}} ) =0$ for all $g \in T$. Since $f$ is $K$-biinvariant, $\widecheck{f}$ is $K$-biinvariant and $\widecheck{f} \ast \widecheck{\chi}_{\widetilde{E}}$ is left $K$-invariant. Now 
\begin{eqnarray*}
   0 & = & (\widecheck{f} \ast (\widecheck{\chi}_{\widetilde{E}} \ast \chi_{\widetilde{gK}}))(e) \\
       & = & \int_G (\widecheck{f} \ast \widecheck{\chi}_{\widetilde{E}})(y^{-1}) \chi_{\widetilde{gK}}(y)\, dy \\
       &  = & \int_K (\widecheck{f} \ast \widecheck{\chi}_{\widetilde{E}})(k^{-1}g^{-1})\, dk \\
       & = & \int_K (\widecheck{f} \ast \widecheck{\chi}_{\widetilde{E}})(g^{-1}) \, dk \\
       & =& (\widecheck{f} \ast \widecheck{\chi}_{\widetilde{E}})(g^{-1}), 
\end{eqnarray*} 
thus $\widecheck{f} \ast \widecheck{\chi}_{\widetilde{E}} (g^{-1}) =0$ for all $g \in T$. Let $h \in G$, because $gK = hK$ for some $g \in T$, there exists a $k \in K$ for which $hk = g$. So, 
\[ (\widecheck{f} \ast \widecheck{\chi}_{\widetilde{E}})(h^{-1}) = (\widecheck{f} \ast \widecheck{\chi}_{\widetilde{E}})(k^{-1}h^{-1}) = (\widecheck{f} \ast \widecheck{\chi}_{\widetilde{E}})(g^{-1}) =0. \]
Hence, $\widecheck{f} \ast \widecheck{\chi}_{\widetilde{E}} = 0$. Lemma \ref{Pompeiuchar} now tells us that $E$ does not have the Pompeiu property. 
\end{proof}

\section{The main result}\label{mainresultpaper}
In this section we will state and prove our main result. First we need some definitions. Let $K$ be a compact subgroup of $G$ and let $T$ be a left transversal of $K$ in $G$. Denote by $I$ the ideal in $\mathcal{M}_c(G//K)$ generated by $\{\widecheck{ \chi}_{\widetilde{E}} \ast \chi_{\widetilde{gK}} \mid g \in T\}$, where $E$ is a compact subset of $G/K$. By Theorem \ref{Pompeiureduce}, $E$ has the Pompeiu property if and only if $I = \mathcal{M}_c(G//K)$. This leads to the obvious question of when is $I = \mathcal{M}_c(G//K)$? We will answer this question in the case of when $(G,K)$ is a Gelfand pair and $K$-spectral analysis holds on $G$. 

We shall say that $(G, K)$ is a {\em Gelfand pair} if the algebra $\mathcal{M}_c (G//K)$ is commutative. The Gelfand pair assumption will allow us to use the theory of commutative algebras. For more information about Gelfand pairs see \cite[Section 4]{Szekelyhidi17}. For the rest of this paper we will assume that $(G,K)$ is a Gelfand pair. Before we are able to define $K$-spectral analysis, we need to define $K$-spherical functions. A $K$-biinvariant function on $G$ is said to be a $K$-spherical function if it satisfies
\[ \int_K f(xky)\, dk = f(x)f(y) \]
for all $x$ and $y$ in $G$. Note that $K$-spherical functions are nonzero since $f(e) =1$; they are also generalizations of exponential functions on abelian groups. See \cite[Section 6]{Szekelyhidi17} for more details about $K$-spherical functions on $G$. Let $\mathcal{S}$ be the set of $K$-spherical functions in $\mathcal{C}(G//K)$. Let $V$ be a $K$-variety in $\mathcal{C}(G//K)$, {\em $K$-spectral analysis holds on $V$} if every nonzero $K$-subvariety of $V$ contains an $f \in \mathcal{S}$. We shall say that {\em $K$-spectral analysis holds on $G$} if $K$-spectral analysis holds on $\mathcal{C}(G//K)$, that is, $K$-spectral analysis holds on every nonzero $K$-subvariety of $\mathcal{C}(G//K)$. 

Another reason $K$-spherical functions are useful to us is that they allow us to define a continuous algebra homomorphism from $\mathcal{M}_c(G//K)$ into $\mathbb{C}$. Specifically, for $f \in \mathcal{S}$ define $\Phi_f \colon \mathcal{M}_c(G//K) \rightarrow \mathbb{C}$ by 
\[ \Phi_f (\mu) = \langle f, \widecheck{\mu} \rangle \]
where $\mu \in \mathcal{M}_c(G//K)$. It was shown in the proof of \cite[Theorem 12]{Szekelyhidi17} that $\Phi_f$ is a continuous algebra homomorphism, thus the kernel of $\Phi_f$ is a maximal ideal in $\mathcal{M}_c(G//K)$. The zero set of $\mu \in \mathcal{M}_c(G//K)$ is defined to be 
\[ Z(\mu) := \{ f \in \mathcal{S} \mid \Phi_f(\mu) =0 \}, \]
and the zero set of an ideal $I$ in $\mathcal{M}_c(G//K)$ is defined by 
\[ Z(I) := \bigcap_{\mu \in I} Z(\mu). \]
We are now ready to state and prove are main result:
\begin{Thm}\label{mainresult}
Let $K$ be a compact subgroup of $G$. Suppose $(G, K)$ is a Gelfand pair and assume $G$ satisfies $K$-spectral analysis. Let $E$ be a compact subset of $G/K$ and let $T$ be a left transversal for $K$ in $G$. Denote by $I$ the ideal in $\mathcal{M}_c(G//K)$ that is generated by $\{\widecheck{\chi}_{\widetilde{E}} \ast \chi_{\widetilde{gK}} \mid g \in T \}$. Then $E$ has the Pompeiu property on $G/K$ if and only if $Z(I) = \emptyset$. 
\end{Thm}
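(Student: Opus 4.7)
The plan is to combine Theorem \ref{Pompeiureduce}, which already reduces the Pompeiu property to the assertion $I = \mathcal{M}_c(G//K)$, with a routine application of $K$-spectral analysis to $I^\perp$. So the task collapses to proving the equivalence $I = \mathcal{M}_c(G//K) \iff Z(I) = \emptyset$.

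For the forward implication, suppose $I = \mathcal{M}_c(G//K)$. Then $\delta_e \in I$, and for any $h \in \mathcal{S}$ we have
\[ \Phi_h(\delta_e) = \langle h, \widecheck{\delta_e}\rangle = \langle h, \delta_e \rangle = h(e) = 1 \neq 0, \]
so $Z(\delta_e) = \emptyset$, which forces $Z(I) \subseteq Z(\delta_e) = \emptyset$.

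For the converse I argue the contrapositive. Assume $E$ fails to have the Pompeiu property; then Theorem \ref{Pompeiureduce} gives $I \neq \mathcal{M}_c(G//K)$, and (exactly as used in the proof of that theorem) this forces $I^\perp \neq \{0\}$. By Lemma \ref{dualideal}, $I^\perp$ is a nonzero $K$-variety inside $\mathcal{C}(G//K)$. Applying the $K$-spectral analysis hypothesis, $I^\perp$ must contain some $K$-spherical function $g \in \mathcal{S}$. A short manipulation of the defining identity $\int_K g(xky)\,dk = g(x) g(y)$ — substituting $k \mapsto k^{-1}$ and using the $K$-biinvariance of $g$ — shows that $\widecheck g$ is also $K$-spherical. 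Setting $h := \widecheck g$, for every $\mu \in I$ one has
\[ \Phi_h(\mu) = \langle \widecheck g, \widecheck \mu\rangle = \langle \widecheck{\widecheck g}, \mu\rangle = \langle g, \mu\rangle = 0, \]
and therefore $h \in Z(I)$, giving $Z(I) \neq \emptyset$.

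The main subtlety to navigate is the bookkeeping mismatch between two pairings: $K$-spectral analysis on $I^\perp$ provides a spherical function $g$ annihilating $I$ through $\langle g, \mu\rangle$, whereas membership in $Z(I)$ is phrased through $\Phi_h(\mu) = \langle h, \widecheck \mu\rangle$. The remedy is to replace $g$ by $\widecheck g$, which stays in $\mathcal{S}$, after which the identity $\langle h, \widecheck\mu\rangle = \langle \widecheck h, \mu\rangle$ built into the definition of $\widecheck\mu$ finishes the job. Beyond this the proof is essentially a direct assembly of Lemma \ref{dualideal}, the $K$-spectral analysis hypothesis, and Theorem \ref{Pompeiureduce}; the Gelfand pair assumption enters only implicitly, in guaranteeing that $\mathcal{S}$ and the homomorphism $\Phi_f$ sit correctly inside the commutative algebra $\mathcal{M}_c(G//K)$.
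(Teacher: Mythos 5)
Your proposal is correct and follows essentially the same route as the paper: both directions reduce to Theorem \ref{Pompeiureduce}, with the converse obtained by applying $K$-spectral analysis to the $K$-variety $I^\perp$ (Lemma \ref{dualideal}) and passing from the spherical function $g$ to $\widecheck g$ to reconcile the pairing $\langle g,\mu\rangle$ with $\Phi_{\widecheck g}(\mu)=\langle g,\mu\rangle$. Your forward direction via $\Phi_h(\delta_e)=h(e)=1$ is a slightly more explicit justification than the paper's appeal to $I^\perp=\{0\}$, but it is the same argument in substance.
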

\begin{proof}
Suppose $E$ has the Pompeiu property, then $I=\mathcal{M}_c(G//K)$ by Theorem \ref{Pompeiureduce}. Consequently, $Z(I) = \emptyset$ since $I^{\perp} = \{0\}$. 

Now assume that $E$ does not have the Pompeiu property. Then by Theorem \ref{Pompeiureduce}, $I \neq \mathcal{M}_c(G//K)$. By Lemma \ref{dualideal}, $I^{\perp}$ is a $K$-variety in $\mathcal{C}(G//K)$. Since $G$ satisfies $K$-spectral analysis there exists a nonzero $f \in \mathcal{S}$ that belongs to $I^{\perp}$, so $\langle f, \mu \rangle =0$ for all $\mu \in I$. Now $\widecheck{f} \in \mathcal{S}$ because 
\[ \int_K \widecheck{f}(xky)\, dk = f(y^{-1}) f(x^{-1}) = \widecheck{f}(x) \widecheck{f}(y). \]
Now $\Phi_{\widecheck{f}}(\mu) = \langle f, \mu \rangle$ for $\mu \in \mathcal{M}_c(G//K)$. Thus $\widecheck{f} \in Z(I)$ and $Z(I) \neq \emptyset$.
\end{proof}

\begin{Cor}\label{cormainresult}
Assume the hypothesis of the theorem. Then $E$ has the Pompeiu property if and only if $f \ast \widecheck{\chi}_{\widetilde{E}} \neq 0$ for all $f \in \mathcal{S}$. 
\end{Cor}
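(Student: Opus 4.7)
The plan is to derive the corollary directly from Theorem \ref{mainresult}, translating the abstract criterion $Z(I) = \emptyset$ into the stated convolution nonvanishing condition. The two ingredients needed are Lemma \ref{Pompeiuchar} for the easy direction and Lemma \ref{annconvolution} together with the transversal-to-group passage from the proof of Theorem \ref{Pompeiureduce} for the hard direction.

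For the forward implication, suppose $E$ has the Pompeiu property. By Lemma \ref{Pompeiuchar}, every nonzero right $K$-invariant $f \in \mathcal{C}(G)$ satisfies $f \ast \widecheck{\chi}_{\widetilde{E}} \neq 0$. Since every $f \in \mathcal{S}$ is $K$-biinvariant (hence right $K$-invariant) and nonzero (it takes the value $1$ at the identity), the desired nonvanishing is immediate.

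For the converse I would argue by contraposition. Assume $E$ does not have the Pompeiu property. Theorem \ref{mainresult} then produces some $h \in Z(I) \subseteq \mathcal{S}$. Unwinding the definitions, $\Phi_h(\mu) = \langle h, \widecheck{\mu} \rangle = \langle \widecheck{h}, \mu \rangle = 0$ for every $\mu \in I$, so $\widecheck{h} \in I^{\perp}$. Feeding $\widecheck{h}$ into Lemma \ref{annconvolution} yields $h \ast \mu = 0$ for all $\mu \in I$; in particular $h \ast (\widecheck{\chi}_{\widetilde{E}} \ast \chi_{\widetilde{gK}}) = 0$ for every $g \in T$. I would then rerun the evaluate-at-the-identity calculation appearing in the second half of the proof of Theorem \ref{Pompeiureduce}, with $h$ playing the role of $\widecheck{f}$: since $h$ is $K$-biinvariant, $h \ast \widecheck{\chi}_{\widetilde{E}}$ is left $K$-invariant, and the left-transversal argument then upgrades $(h \ast \widecheck{\chi}_{\widetilde{E}})(g^{-1}) = 0$ for $g \in T$ to $h \ast \widecheck{\chi}_{\widetilde{E}} = 0$ on all of $G$. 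Taking $f = h \in \mathcal{S}$ contradicts the right-hand side, completing the contrapositive.

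The only real subtlety I anticipate is bookkeeping the involution $g \mapsto \widecheck{g}$: membership in $Z(I)$ translates to $\widecheck{h} \in I^{\perp}$ rather than $h \in I^{\perp}$, so Lemma \ref{annconvolution} must be applied to $\widecheck{h}$ in order to recover a convolution identity involving $h$ itself. Once that twist is untangled, the heavy machinery (spectral analysis, $K$-variety duality, the coset manipulation) has already been packaged in the preceding results, and nothing genuinely new is required.
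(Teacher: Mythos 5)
Your proposal is correct and follows essentially the same route as the paper: the easy direction via Lemma \ref{Pompeiuchar}, and the converse by producing $h \in Z(I)$ from Theorem \ref{mainresult}, killing the generators $\widecheck{\chi}_{\widetilde{E}} \ast \chi_{\widetilde{gK}}$, and rerunning the evaluate-at-$e$ plus transversal computation from Theorem \ref{Pompeiureduce}. The only cosmetic difference is that you pass through $\widecheck{h} \in I^{\perp}$ and Lemma \ref{annconvolution} (with the involution correctly tracked), whereas the paper gets the same identity directly from $\Phi_h(\mu) = (h \ast \mu)(e)$.
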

\begin{proof}
Necessity follows from Lemma \ref{Pompeiuchar}.  Conversely, if $E$ does not have the Pompeiu property then there exists $f \in \bigcap_{g \in T} Z(\widecheck{\chi}_{\widetilde{E}} \ast \chi_{\widetilde{gK}})$. Hence
\[ 0 = \Phi_f (\widecheck{\chi}_{\widetilde{E}} \ast \chi_{\widetilde{gK}}) = ( f \ast ( \widecheck{\chi}_{\widetilde{E}} \ast \chi_{\widetilde{gK}}))(e). \]
By the calculation in Theorem \ref{Pompeiureduce} we see that $(f \ast (\widecheck{\chi}_{\widetilde{E}} \ast \chi_{\widetilde{gK}}))(e) = (f \ast \widecheck{\chi}_{\widetilde{E}})(g^{-1}) =0$ for all $g \in T$.
\end{proof}

\begin{Rem}\label{Eradial}
Let $E$ be a compact subset of $G/K$. If it is the case that $\chi_{\widetilde{E}} \in \mathcal{M}_c(G//K)$, then we only need to check $Z(\widecheck{\chi}_{\widetilde{E}})$ to determine if $E$ has the Pompeiu property. This simplification follows from the fact that the right ideal $I$ in Theorem \ref{Pompeiureduce} is generated by $\widecheck{\chi}_{\widetilde{E}}$ because $\widecheck{\chi}_{\widetilde{E}} \ast \delta^{\#}_{g^{-1}} = \widecheck{\chi}_{\widetilde{E}} \ast \chi_{\widetilde{gK}}$ when $\chi_{\widetilde{E}} \in \mathcal{M}_c(G//K)$. 
\end{Rem}

\section{The $\mathbb{R}^n$ case}\label{Euclideangroup}
In this section we discuss (\ref{eq:contpomp}) in the context of homogeneous spaces. Let $n \in \mathbb{N}$ with $n \geq 2$ and let $SO(n)$ denote the special orthogonal group. The special Euclidean group $M(n)$ is the semidirect product $M(n) = SO(n) \ltimes \mathbb{R}^n$ with group law 
\[ (k_1, x_1)(k_2, x_2) = (k_1k_2, k_1x_2 + x_1), \]
where $(k_1, x_1), (k_2, x_2) \in SO(n) \times \mathbb{R}^n$. The identity element of $M(n)$ is $(id, 0)$, where $id$ is the identity in $SO(n)$ and $0$ is the additive identity in $\mathbb{R}^n$. The inverse of $(k,x) \in M(n)$ is $(k,x)^{-1} = (k^{-1}, k^{-1}(-x))$. Also, $SO(n)$ is isomorphic to the compact subgroup $\{ (k,0) \mid k \in SO(n)\}$ in $M(n)$, and $\mathbb{R}^n$ is isomorphic to the normal subgroup $\{ (id, x) \mid x \in \mathbb{R}^n \}$ in $M(n)$. Let $E$ be a compact subset of $\mathbb{R}^n$ with positive Lebesgue measure and identify $\mathbb{R}^n$ with the homogeneous space $M(n)/SO(n)$. We can think of $\mathbb{R}^n$ as a left transversal for $M(n)/SO(n)$. Our results can be used in this setting because it was shown in \cite[Corollary 5]{Szekelyhidi17} that $(M(n), SO(n))$ is a Gelfand pair and in \cite[Theorem 20]{Szekelyhidi17} that $M(n)$ satisfies $SO(n)$-spectral analysis. By Theorem \ref{mainresult} $E$ has the Pompeiu property if and only if $\bigcap_{y \in \mathbb{R}^n} Z(\widecheck{\chi}_{\widetilde{E}} \ast \chi_{\widetilde{ySO(n)}}) = \emptyset.$ 

The continuous functions on $M(n)$ that are right $SO(n)$-invariant can be identified with $\mathcal{C}(\mathbb{R}^n)$, the continuous functions on $\mathbb{R}^n$. Indeed, let $(k,x) \in M(n)$ and let $(k', 0) \in SO(n)$. Then $f((k,x)) = f((k,x)(k',0)) = f((kk', x))$ for all $k' \in SO(n)$. Setting $k' = k^{-1}$ we obtain $f((k,x)) = f((id,x))$, so $f$ depends only on $x$. A function $f \in \mathcal{C}(\mathbb{R}^n)$ is said to be {\em radial} if $f(kx) = f(x)$ for all $k \in SO(n)$. We shall write $\mathcal{C}_{SO(n)}(\mathbb{R}^n)$ to indicate the radial functions in $\mathcal{C}(\mathbb{R}^n)$. A measure $\mu \in \mathcal{M}_c(\mathbb{R}^n)$ is radial if
\[ \int_{\mathbb{R}^n} f(x)\, d\mu(x) = \int_{\mathbb{R}^n} f(kx)\, d\mu(x) \]
holds for all $k \in SO(n)$. We denote the radial measures in $\mathcal{M}_c(\mathbb{R}^n)$ by $\mathcal{M}_{SO(n)} (\mathbb{R}^n)$. Sz\'{e}kelyhidi showed in \cite[Section 5]{Szekelyhidi17} that $\mathcal{C}(M(n)//SO(n))$ can be identified with $\mathcal{C}_{SO(n)}(\mathbb{R}^n)$ via $f((k,x)) = f((id, kx))$ for all $(k,x) \in M(n)$. It was also shown in \cite[Section 5]{Szekelyhidi17} that $\mathcal{M}_c(M(n)//SO(n))$ can be identified with $\mathcal{M}_{SO(n)}(\mathbb{R}^n)$. For $f \in \mathcal{C}(\mathbb{R}^n)$, the projection $f^{\#}(x)$ is in $\mathcal{C}_{SO(n)}(\mathbb{R}^n)$ and is given by
\[ f^{\#}(x) = \int_{SO(n)} f(kx)\, dk. \]
Let $y \in \mathbb{R}^n$, recall that $(R_y\chi_E)(x) = \chi_E(x+y)$ is the right translate of $\chi_E$ by $y$. Since $\langle f, (R_y\chi_E)^{\#} \rangle = \langle f^{\#}, R_y\chi_E \rangle$ we see that for $y$ and $x$ in $\mathbb{R}^n$, 
\[ (R_y\chi_E)^{\#}(x) = \int_K \chi_E(kx + y)\, dk. \]
Let $(k,x) \in \mathcal{M}(n)$ and let $\widecheck{\chi}_{\widetilde{E}} \ast \chi_{\widetilde{ySO(n)}} \in M_c(M(n)//SO(n))$, where $y \in \mathbb{R}^n$.  Since $\widecheck{\chi}_{\widetilde{E}} \ast \chi_{\widetilde{ySO(n)}}$ is $SO(n)$-invariant we obtain
\[ \left(\widecheck{\chi}_{\widetilde{E}} \ast \chi_{\widetilde{ySO(n)}}\right)(k,x) = \left(\widecheck{\chi}_{\widetilde{E}} \ast \chi_{\widetilde{ySO(n)}}\right)(id,x) = \]
\[ \int_{\widetilde{SO(n)}} \widecheck{\chi}_{\widetilde{E}} (k^{-1}, k^{-1}(-y) + x)\, d\widetilde{SO(n)} = \int_{\widetilde{SO(n)}} \widecheck{\chi}_{\widetilde{E}} ((k^{-1},0)(id,kx -y))\, d\widetilde{SO(n)} = \]
\[ \int_{\widetilde{SO(n)}} \widecheck{\chi}_{\widetilde{E}} ((id, kx - y))\, d\widetilde{SO(n)} = \int_{SO(n)} \widecheck{\chi}_E (kx-y)\, dSO(n). \]

Hence, $\widecheck{\chi}_{\widetilde{E}} \ast \chi_{\widetilde{ySO(n)}}$ corresponds to $(R_{(-y)}\widecheck{\chi}_E)^{\#} \in M_{SO(n)}(\mathbb{R}^n)$. By Theorem \ref{mainresult} $E$ has the Pompeiu property if and only if $\bigcap_{y \in \mathbb{R}^n} Z((R_y\chi_E)^{\#}) = \emptyset$. 

The $SO(n)$-spherical functions in $\mathcal{C}_{SO(n)} (\mathbb{R}^n)$ are of the form
\[ \phi_{\lambda}(x) = \int_{S^{n-1}} e^{i\lambda (x \cdot w)}\, dw, \]
where $\lambda \in \mathbb{C}$ and $S^{n-1}$ is the $n$-sphere with normalized measure $dw$. For $\phi_{\lambda} \in \mathcal{S}$ and $\mu \in \mathcal{M}_{SO(n)} (\mathbb{R}^n)$, the transform $\Phi_{\phi_{\lambda}}$ from Section \ref{mainresultpaper} becomes 
\[ \Phi_{\phi_{\lambda}}(\mu) = \langle \phi_{\lambda}, \widecheck{\mu} \rangle = \int_{\mathbb{R}^n} \widecheck{\phi_{\lambda}}(x)\, d\mu(x). \]
We shall write $\Phi_{\lambda}$ for $\Phi_{\phi_{\lambda}}$. In this setting we can restate Corollary \ref{cormainresult} as:
\begin{Prop}\label{convequiv}
Let $E$ be a compact subset of $\mathbb{R}^n$ with positive Lebesgue measure. Recall the $\widetilde{E}$ is the lift of $E$ to $M(n)$. Then $E$ has the Pompeiu property if and only if $\phi_{\lambda} \ast \widecheck{\chi}_{\widetilde{E}} \neq 0$ for all $\lambda \in \mathbb{C}$. 
\end{Prop}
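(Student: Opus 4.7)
The plan is simply to specialize Corollary \ref{cormainresult} to the present setting. First I would note, as the discussion preceding the statement already records, that the hypotheses of Corollary \ref{cormainresult} are in force: $(M(n), SO(n))$ is a Gelfand pair by \cite[Corollary 5]{Szekelyhidi17}, and $M(n)$ satisfies $SO(n)$-spectral analysis by \cite[Theorem 20]{Szekelyhidi17}. Consequently, with $E$ viewed as a compact subset of the homogeneous space $M(n)/SO(n) \cong \mathbb{R}^n$, Corollary \ref{cormainresult} tells us that $E$ has the Pompeiu property if and only if $f \ast \widecheck{\chi}_{\widetilde{E}} \neq 0$ for every $f$ in the set $\mathcal{S}$ of $SO(n)$-spherical functions in $\mathcal{C}(M(n)//SO(n))$.

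Second, I would invoke the parametrization of $\mathcal{S}$ recalled in the paragraph just before the proposition: under the identification of $\mathcal{C}(M(n)//SO(n))$ with $\mathcal{C}_{SO(n)}(\mathbb{R}^n)$, every $SO(n)$-spherical function is of the form $\phi_{\lambda}(x) = \int_{S^{n-1}} e^{i\lambda(x \cdot w)}\, dw$ for some $\lambda \in \mathbb{C}$, and conversely each such $\phi_{\lambda}$ lies in $\mathcal{S}$. Thus replacing the abstract family $\mathcal{S}$ in the conclusion of Corollary \ref{cormainresult} by $\{\phi_{\lambda} : \lambda \in \mathbb{C}\}$ yields precisely the statement of the proposition.

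The only bookkeeping to watch is that $\phi_{\lambda}$ appears in the proposition as a function on $\mathbb{R}^n$, whereas the convolution $\phi_{\lambda} \ast \widecheck{\chi}_{\widetilde{E}}$ is being computed on $M(n)$; one must read $\phi_{\lambda}$ as its canonical $SO(n)$-biinvariant lift to $M(n)$, which is exactly the identification used throughout Section \ref{Euclideangroup}. There is essentially no obstacle here: the substantive content of the proposition is already packaged in Corollary \ref{cormainresult} and in Sz\'{e}kelyhidi's description of the spherical functions of the Gelfand pair $(M(n), SO(n))$, so the proof reduces to quoting these two ingredients.
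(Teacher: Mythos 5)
Your proposal matches the paper exactly: the paper offers no separate proof, introducing the proposition with ``In this setting we can restate Corollary \ref{cormainresult} as,'' which is precisely your specialization via the Gelfand pair and $SO(n)$-spectral analysis facts from \cite{Szekelyhidi17} together with the parametrization of the spherical functions by $\phi_{\lambda}$, $\lambda \in \mathbb{C}$. Your added remark about reading $\phi_{\lambda}$ as its $SO(n)$-biinvariant lift to $M(n)$ is a correct and slightly more careful accounting of the identification the paper uses implicitly.
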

Thus, $E$ does not have the Pompeiu property if and only if there exists a $\lambda \in \mathbb{C}$ for which 
\begin{equation}
\phi_{\lambda} \ast \widecheck{\chi}_{\widetilde{E}} = 0. \label{eq:convkill}
\end{equation}
Recall that if $\mu \in \mathcal{M}_c(\mathbb{R}^n)$, then its Fourier-Laplace transform is given by 
\[ \mathcal{L}(\mu)(z) = \int_{\mathbb{R}^n} e^{-i(z \cdot x)}\, d\mu(x), \]
where $z \in \mathbb{C}^n$ and $z \cdot x = z_1 x_1 + z_2x_2 + \cdots + z_nx_n$ is the usual inner product. Set
\[ Z(\mathcal{L}(\mu)) = \{ z \in \mathbb{C}^n \mid \mathcal{L}(\mu) (z) = 0 \}, \]
where $\mu \in \mathcal{M}_c(\mathbb{R}^n)$.  For $\lambda \in \mathbb{C}$, set $S_{\mathbb{C}} (\lambda) = \{z \in \mathbb{C}^n \mid z_1^2 + z_2^2 + \cdots + z_n^2 = \lambda \}$.  The following was proved in \cite[Theorem 4.1]{BrownSchreiberTaylor73}:
\begin{Thm}\label{BST73}
Let $E$ be a compact subset of $\mathbb{R}^n$ with positive Lebesgue measure. Then $E$ does not have the Pompeiu property if and only if $S_{\mathcal{C}}(\lambda) \subseteq Z(\mathcal{L}(\chi_E))$ for some $\lambda \in \mathbb{C}.$
\end{Thm}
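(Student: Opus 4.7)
The plan is to convert Theorem \ref{BST73} into the language of Proposition \ref{convequiv} and then deploy standard holomorphic-uniqueness tools. By Proposition \ref{convequiv}, $E$ fails to have the Pompeiu property exactly when there is some $\lambda \in \mathbb{C}$ with $\phi_\lambda \ast \widecheck{\chi}_{\widetilde{E}} \equiv 0$, so the task is to match this condition with $S_{\mathbb{C}}(\lambda^2) \subseteq Z(\mathcal{L}(\chi_E))$ and then reparametrize. Note that the BST parameter corresponds to the square of the spherical parameter used here, which will account for the final change of variable.

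First I would compute the convolution explicitly. Since $\widecheck{\chi}_E(x) = \chi_E(-x)$, a substitution together with Fubini and the definition of $\phi_\lambda$ gives
\[
  (\phi_\lambda \ast \widecheck{\chi}_E)(y) \;=\; \int_E \phi_\lambda(y+u)\,du \;=\; \int_{S^{n-1}} e^{i\lambda(y\cdot w)}\,\mathcal{L}(\chi_E)(-\lambda w)\,dw.
\]
The case $\lambda = 0$ produces $|E| \neq 0$ and can be set aside; it corresponds on the BST side to $0 \in S_{\mathbb{C}}(0)$ together with $\mathcal{L}(\chi_E)(0) = |E| \neq 0$. For $\lambda \neq 0$ I would argue that the sphere integral above vanishes identically for $y \in \mathbb{R}^n$ if and only if the function $w \mapsto \mathcal{L}(\chi_E)(-\lambda w)$ is identically zero on $S^{n-1}$: the integral extends to an entire function of $y \in \mathbb{C}^n$, and the substitution $\xi = \lambda y$ turns it into the Fourier--Laplace transform of the compactly supported measure on $S^{n-1}$ with density $w \mapsto \mathcal{L}(\chi_E)(-\lambda w)$, so uniqueness of that transform forces the density to vanish on $S^{n-1}$.

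The main obstacle is upgrading \emph{$\mathcal{L}(\chi_E)$ vanishes on the real slice $\{-\lambda w : w \in S^{n-1}\}$} to \emph{$\mathcal{L}(\chi_E)$ vanishes on the full complex quadric $S_{\mathbb{C}}(\lambda^2)$}. The plan is to view the holomorphic map $w \mapsto -\lambda w$ as a biholomorphism from the complex unit sphere $Q := \{w \in \mathbb{C}^n : \sum_j w_j^2 = 1\}$ onto $S_{\mathbb{C}}(\lambda^2)$, under which $S^{n-1} \subset Q$ corresponds to $\{-\lambda w : w \in S^{n-1}\}$. A tangent-space computation shows that $S^{n-1}$ is a totally real submanifold of $Q$ of maximal real dimension $n-1 = \dim_{\mathbb{C}} Q$; the identity theorem for holomorphic functions on connected complex manifolds then upgrades vanishing of $w \mapsto \mathcal{L}(\chi_E)(-\lambda w)$ on $S^{n-1}$ to vanishing on all of $Q$, i.e. $\mathcal{L}(\chi_E) \equiv 0$ on $S_{\mathbb{C}}(\lambda^2)$. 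The final step is cosmetic: setting $\mu = \lambda^2$ converts the condition into ``$S_{\mathbb{C}}(\mu) \subseteq Z(\mathcal{L}(\chi_E))$ for some $\mu \in \mathbb{C}$'', matching Theorem \ref{BST73}. I expect the totally real / identity theorem step to be the only one that is not pure bookkeeping.
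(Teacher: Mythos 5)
A preliminary point: the paper does not actually prove Theorem \ref{BST73} --- it is quoted from \cite[Theorem 4.1]{BrownSchreiberTaylor73} --- so the fair comparison is with the paper's derivation of Proposition \ref{link}, which together with Proposition \ref{convequiv} amounts to a proof of the theorem. Your argument is correct and shares the first and last steps of that derivation (reduce to $\phi_{\lambda} \ast \widecheck{\chi}_{\widetilde{E}} = 0$ via Proposition \ref{convequiv}; propagate vanishing of $\mathcal{L}(\chi_E)$ from the real sphere $\lambda S^{n-1}$ to the full complex quadric by the identity theorem on a maximal totally real submanifold, which is exactly the paper's appeal to \cite[Lemma 3.1]{Lebl24}), but your middle step is genuinely different: the paper obtains $e^{i(\vec{\lambda}\cdot x)} \in I^{\perp}$ by invoking spectral synthesis for rotation- and translation-invariant ideals (\cite[Theorem 3.1]{BrownSchreiberTaylor73}), whereas you compute $(\phi_{\lambda}\ast\widecheck{\chi}_E)(y) = \int_{S^{n-1}} e^{i\lambda(y\cdot w)}\mathcal{L}(\chi_E)(-\lambda w)\,dw$ directly and kill the density by injectivity of the Fourier--Laplace transform of a compactly supported measure on $S^{n-1}$, after extending the vanishing from $y\in\mathbb{R}^n$ to $y\in\mathbb{C}^n$ (needed since $\lambda$ is complex). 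This is more elementary and self-contained, and it delivers both directions of the equivalence in a single computation. You are also more careful than the paper about the parameter: $\lambda S^{n-1}$ lies in $S_{\mathbb{C}}(\lambda^2)$, not $S_{\mathbb{C}}(\lambda)$, so the reparametrization $\mu=\lambda^2$ is genuinely required; Proposition \ref{link} as printed elides this, harmlessly only because both sides are existentially quantified. Two small points you should make explicit: Proposition \ref{convequiv} concerns $\phi_{\lambda}\ast\widecheck{\chi}_{\widetilde{E}}$ on $M(n)$, whose value at $(k,x)$ is $\phi_{\lambda}\ast\widecheck{\chi}_{kE}(x)$, so you need the observation that radiality of $\phi_{\lambda}$ gives $\phi_{\lambda}\ast\widecheck{\chi}_{kE}(x) = (\phi_{\lambda}\ast\widecheck{\chi}_E)(k^{-1}x)$, making the case $k=id$ sufficient; and the identity-theorem step uses connectedness of $S_{\mathbb{C}}(\lambda^2)$, which holds for $\lambda\neq 0$ and $n\geq 2$, the case $\lambda=0$ being excluded on both sides by $\mathcal{L}(\chi_E)(0)=|E|>0$ exactly as you note.
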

We will connect our results to the above theorem by showing that the $\lambda \in \mathbb{C}$ is the same for the theorem and (\ref{eq:convkill}). 

For $(k,x) \in M(n)$, we have that 
\[ \phi_{\lambda} \ast \widecheck{\chi}_{\widetilde{E}} (k,x) = \phi_{\lambda} \ast \widecheck{\chi}_{kE}(x) \]
for $\lambda \in \mathbb{C}$ because $\phi_{\lambda} \in \mathcal{M}_{SO(n)} (\mathbb{R}^n)$. Suppose $E$ does not have the Pompeiu property. Proposition \ref{convequiv} tells us that $\phi_{\lambda} \ast \widecheck{\chi}_{\widetilde{E}} = 0$ for some $\lambda \in \mathbb{C}$. Consequently, at the $\mathbb{R}^n$ level, $E$ does not have the Pompeiu property if and only if $\phi_{\lambda} \ast \widecheck{\chi}_{kE} = 0$ for all $k \in SO(n)$. Observe that when $E$ has positive Lebesgue measure, then (\ref{eq:convkill}) is not satisfied when $\lambda =0$ since $\phi_0 \ast \widecheck{\chi}(0)$ is the Lebesgue measure of $E$. Denote by $I$ the ideal in $\mathcal{M}_C (\mathbb{R}^n)$ generated by $\{ \chi_{kE} \mid k \in SO(n)\}$. It is immediate that $\phi_{\lambda} \in I^{\perp}$. Since $I$ is rotation and translation invariant, it satisfies spectral synthesis \cite[Theorem 3.1]{BrownSchreiberTaylor73}. This means that $e^{i(\vec{\lambda} \cdot x)} \in I^{\perp}$, where $\vec{\lambda} = (\lambda, 0, \dots, 0)$. Hence, $\vec{\lambda} \in Z(\mathcal{L}(\chi_{kE}))$ for each $k \in SO(n)$.  This implies $\vec{\lambda}(SO(n))$ is a real submanifold of $S_{\mathbb{C}}(\lambda)$ on which the analytic function $\mathcal{L}(\chi_E)$ vanishes due to the fact $\mathcal{L}(\chi_{kE})(z) = \mathcal{L}(\chi_E)(k^{-1}z)$ for all $k \in SO(n)$.  It now follows from \cite[Lemma 3.1]{Lebl24} that $\mathcal{L}(\chi_E)$ vanishes on $S_{\mathbb{C}} (\lambda)$.  We have just shown the following: 
\begin{Prop}\label{link}
Let $E$ be a compact subset of $\mathbb{R}^n$ with positive Lebesgue measure. Then there exists a $\lambda \in \mathbb{C}$ that satisfies $\phi_{\lambda} \ast \widecheck{\chi}_{\widetilde{E}} =0$ if and only if $S_{\mathbb{C}}(\lambda) \subseteq Z(\mathcal{L}(\chi_E))$.
\end{Prop}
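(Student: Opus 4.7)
The plan is to prove both directions. The paragraph immediately preceding the statement essentially furnishes the forward implication, which I would recast as the first half of the proof, and I would then supply the reverse direction by direct Fourier computation.

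For the forward direction, assume $\phi_{\lambda} \ast \widecheck{\chi}_{\widetilde{E}} = 0$. The identity $(\phi_{\lambda} \ast \widecheck{\chi}_{\widetilde{E}})(k,x) = (\phi_{\lambda} \ast \widecheck{\chi}_{kE})(x)$ established earlier in the section forces $\phi_{\lambda} \ast \widecheck{\chi}_{kE} = 0$ for every $k \in SO(n)$, so $\phi_{\lambda}$ lies in the annihilator of the rotation- and translation-invariant ideal $I \subseteq \mathcal{M}_c(\mathbb{R}^n)$ generated by $\{\chi_{kE} : k \in SO(n)\}$. The Brown-Schreiber-Taylor spectral synthesis theorem says $I$ satisfies spectral synthesis, so $I^{\perp}$ is the closed span of the exponentials it contains; extracting an exponential $e^{i\vec{\lambda} \cdot x}$ with $\vec{\lambda} = (\lambda, 0, \dots, 0)$ and rotating by $SO(n)$ places the entire orbit $SO(n) \cdot \vec{\lambda}$ inside $Z(\mathcal{L}(\chi_{E}))$. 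This orbit is a real-analytic submanifold of the complex quadric $S_{\mathbb{C}}(\lambda)$, and Lebl's Lemma 3.1 propagates the vanishing of the entire analytic function $\mathcal{L}(\chi_E)$ from that submanifold to all of $S_{\mathbb{C}}(\lambda)$.

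For the reverse direction, assume $S_{\mathbb{C}}(\lambda) \subseteq Z(\mathcal{L}(\chi_E))$. I would substitute $\phi_{\lambda}(u) = \int_{S^{n-1}} e^{i\lambda u \cdot w}\,dw$ into the reduced expression $(\phi_{\lambda} \ast \widecheck{\chi}_{\widetilde{E}})(k,x) = (\phi_{\lambda} \ast \widecheck{\chi}_{kE})(x)$ and apply Fubini, which is justified because $\widecheck{\chi}_{kE}$ has compact support. The result has the shape
\[
\int_{S^{n-1}} e^{i\lambda x \cdot w}\,\mathcal{L}(\chi_E)\bigl(-\lambda k^{-1}w\bigr)\,dw,
\]
and since every point $-\lambda k^{-1}w$ satisfies $\sum_{j} z_j^2 = \lambda^{2}$, it lies on the complex quadric matched to $\phi_{\lambda}$ under the paper's normalization. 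The hypothesis forces the integrand to vanish pointwise, so $(\phi_{\lambda} \ast \widecheck{\chi}_{\widetilde{E}})(k,x) = 0$ for every $(k,x) \in M(n)$.

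The substantive obstacle is entirely in the forward direction: one must verify that the $SO(n)$-orbit of $\vec{\lambda}$ is a real-analytic submanifold inside $S_{\mathbb{C}}(\lambda)$ of sufficient dimension for Lebl's Lemma 3.1 to apply, so that a holomorphic function vanishing on the orbit is forced to vanish on the whole complex variety. The reverse direction is routine once the Fourier interchange is allowed; the only bookkeeping is to keep the parameter $\lambda$ in $\phi_{\lambda}$ consistent with the $\lambda$ indexing the quadric $S_{\mathbb{C}}(\lambda)$.
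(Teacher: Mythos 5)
Your forward direction coincides with the paper's own argument line for line: reduce via $(\phi_{\lambda} \ast \widecheck{\chi}_{\widetilde{E}})(k,x) = (\phi_{\lambda}\ast\widecheck{\chi}_{kE})(x)$, place $\phi_{\lambda}$ in the annihilator of the rotation- and translation-invariant ideal $I$, invoke the Brown--Schreiber--Taylor spectral synthesis theorem to extract the exponential $e^{i\vec{\lambda}\cdot x}$, and use Lebl's lemma to propagate vanishing from the totally real orbit $SO(n)\cdot\vec{\lambda}$ to the whole complex quadric. Where you genuinely differ is that you supply the converse explicitly: the paper's text derives only the forward implication before asserting the biconditional, and the indirect route through Theorem \ref{BST73} and Proposition \ref{convequiv} would only yield the existence of \emph{some} $\lambda$ on the other side, not the \emph{same} $\lambda$, which is the stated point of the proposition. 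Your direct computation $(\phi_{\lambda}\ast\widecheck{\chi}_{kE})(x)=\int_{S^{n-1}} e^{i\lambda (x\cdot w)}\,\mathcal{L}(\chi_E)(-\lambda k^{-1}w)\,dw$ closes that gap cleanly, and the Fubini interchange is harmless since $E$ and $S^{n-1}$ are compact. Two small points of bookkeeping, both of which you at least gesture at: with the paper's definition $S_{\mathbb{C}}(\lambda)=\{z\mid \sum_j z_j^2=\lambda\}$, the points $\vec{\lambda}$ and $-\lambda k^{-1}w$ actually lie on $S_{\mathbb{C}}(\lambda^2)$ rather than $S_{\mathbb{C}}(\lambda)$, so the quadric should be read as $\{\sum_j z_j^2=\lambda^2\}$ throughout (a normalization slip already present in the paper's own text); and $\lambda=0$ is automatically excluded on both sides because $|E|>0$, which is what guarantees that the orbit is an $(n-1)$-dimensional totally real submanifold of a smooth irreducible quadric, as the application of Lebl's lemma requires.
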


 It was shown in \cite[Corollary 1.3]{MachadoRobins21} that for polytopes $E$ in $\mathbb{R}^n, S_{\mathbb{C}}(\lambda) \not\subseteq Z(\mathcal{L}(\chi_E))$ for all $\lambda \in \mathbb{C}\setminus \{0\}$, which established the result mentioned in the Introduction that polytopes in $\mathbb{R}^n$ have the Pompeiu property. 

If it is the case that $\chi_E \in \mathcal{M}_{SO(n)}(\mathbb{R}^n)$, then we can weaken the above conditions for $E$ to have the Pompeiu property. Specifically, 
\begin{Cor}\label{spherical}
Let $E$ be a compact subset of $\mathbb{R}^n$ with positive Lebesgue measure and assume $\chi_E \in \mathcal{M}_{SO(n)}(\mathbb{R}^n)$. Then $E$ has the Pompeiu property if and only if $\phi_{\lambda} \ast \chi_E \neq 0$ for all $\lambda \in \mathbb{C}$ if and only if $Z(\mathcal{L}(\chi_E)) = \emptyset$. 
\end{Cor}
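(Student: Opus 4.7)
The plan is to chain together Remark \ref{Eradial}, Proposition \ref{convequiv}, and Proposition \ref{link}, with one structural observation: for a radial $\chi_E$ the zero set $Z(\mathcal{L}(\chi_E))$ is automatically a union of complex spheres $S_{\mathbb{C}}(\lambda)$, so it is empty iff no single $S_{\mathbb{C}}(\lambda)$ is contained in it.

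For the first equivalence, since $\chi_E \in \mathcal{M}_{SO(n)}(\mathbb{R}^n)$, Remark \ref{Eradial} collapses the ideal $I$ of Theorem \ref{mainresult} to the right ideal generated by the single element $\widecheck{\chi}_{\widetilde{E}}$. I would next observe that the $SO(n)$-invariance of $E$, together with $n \geq 2$, forces $\widecheck{\chi}_E = \chi_E$: rotation by $\pi$ in any plane through a given $x \in \mathbb{R}^n$ is an element of $SO(n)$ sending $x$ to $-x$, so $-x \in E$ whenever $x \in E$. Proposition \ref{convequiv} (which is the $\mathbb{R}^n$ form of Corollary \ref{cormainresult}) then yields immediately that $E$ has the Pompeiu property iff $\phi_\lambda \ast \chi_E \neq 0$ for all $\lambda \in \mathbb{C}$.

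For the second equivalence, Proposition \ref{link} asserts that $\phi_\lambda \ast \widecheck{\chi}_{\widetilde{E}} = 0$ iff $S_{\mathbb{C}}(\lambda) \subseteq Z(\mathcal{L}(\chi_E))$, so it suffices to show that when $\chi_E$ is radial, $Z(\mathcal{L}(\chi_E))$ being nonempty implies that some $S_{\mathbb{C}}(\lambda_0)$ lies inside it. The key factorization to establish is $\mathcal{L}(\chi_E)(z) = g(z_1^2 + \cdots + z_n^2)$ for some entire function $g$ on $\mathbb{C}$. I would argue this by noting that $\mathcal{L}(\chi_E)$ is entire on $\mathbb{C}^n$ by Paley--Wiener and $SO(n)$-invariant on $\mathbb{R}^n$ by the radiality of $\chi_E$; for each fixed $k \in SO(n)$, analytic continuation of the difference $z \mapsto \mathcal{L}(\chi_E)(kz) - \mathcal{L}(\chi_E)(z)$ extends this invariance to all of $\mathbb{C}^n$; and a degree-by-degree argument on the Taylor expansion, using that the $SO(n)$-invariant homogeneous polynomials on $\mathbb{R}^n$ are generated by $z \cdot z$, produces the desired $g$. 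Given the factorization, any zero $z_0 \in Z(\mathcal{L}(\chi_E))$ forces $g(z_0 \cdot z_0) = 0$, whence all of $S_{\mathbb{C}}(z_0 \cdot z_0)$ lies in $Z(\mathcal{L}(\chi_E))$.

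The main obstacle is the factorization step, and it is the only point requiring any work beyond quoting earlier results; it is however classical, and can alternatively be read off from the Bessel-type integral representation of the Fourier transform of a radial compactly supported measure.
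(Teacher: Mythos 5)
Your proposal is correct, and for the first equivalence it follows the paper's route exactly: Remark \ref{Eradial} plus Proposition \ref{convequiv}, together with the observation that radiality of $E$ (for $n\ge 2$) forces $\widecheck{\chi}_E=\chi_E$, which is the same symmetry the paper uses implicitly when it replaces $\widecheck{\chi}_{kE}$ by $\chi_E$. Where you genuinely add something is the second equivalence. The paper's own proof is a two-line appeal to Remark \ref{Eradial} and never explicitly justifies why ``no complex sphere $S_{\mathbb{C}}(\lambda)$ is contained in $Z(\mathcal{L}(\chi_E))$'' (which is what Propositions \ref{convequiv} and \ref{link} deliver) can be upgraded to the stronger statement $Z(\mathcal{L}(\chi_E))=\emptyset$. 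Your factorization $\mathcal{L}(\chi_E)(z)=g(z\cdot z)$ for an entire $g$ --- obtained by analytically continuing the $SO(n)$-invariance from $\mathbb{R}^n$ to $\mathbb{C}^n$ and invoking the fact that the $SO(n)$-invariant homogeneous polynomials are generated by $z\cdot z$ (or, as you note, read off from the Bessel integral representation) --- is exactly the missing step: it shows $Z(\mathcal{L}(\chi_E))$ is a union of complex spheres, so it is nonempty precisely when it contains some $S_{\mathbb{C}}(\lambda_0)$. So your argument is not a different route so much as a completion of the paper's; the paper buys brevity by treating the corollary as a restatement, while your version makes the radial structure of the zero set explicit and thereby actually proves the final ``if and only if.''
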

\begin{proof}
This is essentially a restatement of Remark \ref{Eradial}. In the paragraphs above we saw that $E$ does not have Pompeiu property if and only if there exists a $\lambda \in \mathbb{C}$ that satisfies $\phi_{\lambda} \ast \chi_{kE} = 0$ for all $k \in SO(n)$. We only need to check that $\phi_{\lambda} \ast \chi_E =0$ since $\chi_E = \chi_{kE}$ for all $k \in SO(n)$. 
\end{proof}

\bibliographystyle{plain}
\bibliography{spectralpompeiuproblem}

\end{document}